\newcounter{theorem}
\newtheorem{theorem}{Theorem}[section]
\newtheorem{corollary}{Corollary}[section]
\newtheorem{definition}{Definition}[section]
\newtheorem{example}{Example}[section]
\newtheorem{proposition}{Proposition}[section]
\newenvironment{proof}[1][Proof]{\textbf{#1.} }{\rule{0.5em}{0.5em}}
\begin{document}

\begin{center}

{\Large \bf On the Directional Derivative Sets  and \\ Differentials of the Set Valued Maps}

\vspace{5mm}

Serpil Altay, Nihal Ege, Anar Huseyin, Nesir Huseyin
\end{center}

\vspace{2mm}
{\small

\vspace{4mm}

{\bf Abstract.} In this paper directional derivative sets and differentials of a given set valued map are studied. Relations between the set valued map and compact subsets of the directional derivative sets of the given map are investigated. Upper and lower contingent cones of some plane sets are calculated.

{\bf Mathematics Subject Classification 2010:} 26E25, 54C60

{\bf Key words:} Set valued map, directional derivative set, differential. }
\vspace{2mm}

\section{Introduction}

Set valued maps arise in the mathematical models of physics, economics and biology. They are important tools for investigation of the optimal control, optimization and game theory problems. In studying of these problems  it is often required to deal with differential or directional derivative sets of the given set valued map. In general, the differential notions of the set valued maps are based on the various types of tangent and contingent cones. In this paper, the upper and lower differentials of the set valued map are defined via upper and lower Bouligand contingent cones, which are used in many problems of set valued and nonsmooth analysis.  The definition of directional derivative set is different from the definition of differential, but they also are closely connected  with  the concept of contingent cones. In the presented paper, the  connections between differentials (upper and lower)  and directional derivative sets (upper and lower) of a set valued map are studied. Upper and lower contingent cones of the sets given on the plane  are calculated. The paper is organized as follows:

In Section 2 the upper and lower contingent cones of the sets given on the plane are calculated (Example \ref{ex1} and Example \ref{ex2}). In Section 3 connections between directional upper (lower) derivative sets  and upper (lower) differentials are given. It is shown that if the set valued map is not locally Lipschitz continuous, then lower derivative set in the direction $p$ and the value of the lower differential at $p$ does not coincide (Example \ref{ex3}). For scalar variable set valued maps, it is proved that upper derivative set in the direction $p$ and the value of the upper differential at $p$ are equal (Theorem \ref{teo23}). In Section 4 the properties of the compact subsets of the directional derivative sets and differentials are investigated. The Hausdorff deviation of the cone generated by a compact subset of the lower directional derivative set from the given set valued map is estimated  (Theorem \ref{teo31}).

\setcounter{equation}{0}

\section{Upper and Lower Contingent Cones}

Let us give the definitions of upper and lover contingent cones.
\begin{definition} 
Let $X$ be a Banach space,  $K\subset X$ be a closed set  and  $x\in X$. The sets
\begin{eqnarray*} T^{U}_{K}(x)=\left\{u \in X:\liminf_{\delta \rightarrow 0^{+}}\frac{1}{\delta} d(x+\delta u, K)=0\right\}
\end{eqnarray*} and
\begin{eqnarray*}
T^{L}_{K}(x)=\left\{u \in X:\lim_{\delta \rightarrow 0^{+}}\frac{1}{\delta} d(x+\delta u, K)=0\right\}
\end{eqnarray*}
are called the upper and lower contingent cone of the set $K$ at $x\in X$ respectively,  where
$\displaystyle d(y,K)=\inf_{z\in K}\left\| y-z \right\|$, i.e. $d(y,K)$ is the distance from the point $y$ to the set $K$.
  \end{definition}

$T^{U}_{K}(x)$ and $T^{L}_{K}(x)$ are closed cones in the space $X.$
It is obvious that $u\in T^{U}_{K}(x)$ if and only if there exist sequences $\left\{\delta_{i}\right\}_{i=1}^{\infty}$ and $\left\{s_{i}\right\}_{i=1}^{\infty}$ such that $\delta_{i}\rightarrow 0^+$  and  $s_{i}\rightarrow 0$ as $i\rightarrow +\infty$ and the inclusion $x_{i}=x+\delta_{i}u+\delta_{i}s_{i} \in K$ is satisfied for every $i=1,2,\ldots$.

Similarly,  $u\in T^{L}_{K}(x)$ if and only if there exist $\delta_{*}>0$ and $s(\cdot):(0,\delta_{*}]\rightarrow X$ such that $s(\delta)\rightarrow 0$ as $\delta \rightarrow 0^+$ and the inclusion $x(\delta)=x+\delta u+\delta s(\delta) \in K$ is verified for every $\delta \in [0,\delta_{*}]$.

Now, let us  calculate contingent cones of the sets given on the plane.
\begin{example} \label{ex1}
Let the set $K \subset \mathbb{R}^{2}$ be given as
\begin{eqnarray} \label{ka}
K=\left\{ \left(\frac{1}{n},\frac{1}{n}\right) \in \mathbb{R}^{2}:n\in \mathbb{N}\right\}\bigcup \left\{(0,0)\right\},
\end{eqnarray} where $\mathbb{N}=\left\{1,2,\ldots \right\}.$

According to (AUBIN, J.P.; FRANKOWSKA, H. $-$\emph{Set Valued Analysis},
Birkhauser, Boston, 2009. p.161, Fig. 4.4),  $T^{L}_{K}(0,0)=\left\{(0,0)\right\}$ and $T^{U}_{K}(0,0) =\big\{(\alpha , \alpha) \in  \mathbb{R}^{2}:\alpha \geq 0 \big\}$. Note that

\begin{eqnarray} \label{equ}T^{L}_{K}(0,0)=T^{U}_{K}(0,0)=\left\{(\alpha , \alpha)\in \mathbb{R}^{2}:\alpha \geq 0 \right\},
\end{eqnarray} and hence the equality $T^{L}_{K}(0,0)=\left\{(0,0)\right\}$  is not true. The validity of equality (\ref{equ}) can be shown in the following way.

  First of all, we show that $(1,1) \in T^{L}_{K}(0,0).$
  Choose an arbitrary sequence $\{\delta_{i}\}_{i=1}^{\infty}$ such that  $\delta_i \rightarrow 0^{+}$ as $i\rightarrow +\infty$. Then for each $\delta_i$ there exists $m_{i}\in \mathbb{N}$ such that
  \begin{equation}\label{1}
     \delta_{i}\in \left(\frac{1}{m_{i}+1},\frac{1}{m_{i}}\right].
  \end{equation}

   Since $\delta_i \rightarrow 0^{+}$ as $i\rightarrow +\infty$ then $m_i \rightarrow +\infty$ as $i\rightarrow +\infty.$  Thus, (\ref{1}) implies
   \begin{eqnarray*}
   \lim_{i \rightarrow \infty}\frac{1}{\delta_{i}}d\left( \left(0,0\right)+\delta_{i}(1,1),K\right)  &=&
     \lim_{i \rightarrow \infty}\frac{1}{\delta_{i}}d\left(\left(\delta_{i},\delta_{i}\right),K
     \right) \\ & \leq & \lim_{i \rightarrow \infty}\frac{1}{\delta_{i}}d\left( \left( \frac{1}{m_{i}},\frac{1}{m_{i}}\right), \left(\frac{1}{m_{i}+1},\frac{1}{m_{i}+1}\right) \right)\\
     & = & \lim_{i \rightarrow \infty}\frac{1}{\delta_{i}}\sqrt{2}\left(\frac{1}{m_{i}}-\frac{1}{m_{i}+1} \right)\\
     &=& \lim_{i \rightarrow \infty}\frac{1}{\delta_{i}}\frac{\sqrt{2}}{m_{i}(m_{i}+1)} \\
     &\leq& \lim_{i \rightarrow \infty} \left(m_{i}+1 \right)\sqrt{2}\frac{1}{m_{i}(m_{i}+1)}\\
     &=&\lim_{i \rightarrow \infty}\sqrt{2}\frac{1}{m_{i}}=0 .
   \end{eqnarray*}

   Since the sequence $\{\delta_{i}\}_{i=1}^{\infty}$ is arbitrarily chosen, we conclude that
  \begin{eqnarray*} \lim_{\delta\rightarrow 0^{+}}\frac{1}{\delta}d\left((0,0)+\delta (1,1),K\right)=0,
  \end{eqnarray*}
which proves that $(1,1) \in T^{L}_{K}(0,0)$. Since $(1,1) \in T^{L}_{K}(0,0)$ and  $T^{L}_{K}(0,0)\subset \mathbb{R}^2$ is a cone, then we obtain that
$(\alpha,\alpha)\in T^{L}_{K}(0,0)$ for every $\alpha \geq 0$. Thus the inclusion
\begin{eqnarray}\label{eq14*}
\left\{(\alpha,\alpha)\in \mathbb{R}^{2}:\alpha \geq 0\right\}\subset T^{L}_{K}(0,0)
\end{eqnarray}
is verified.

Now, we choose an arbitrary  $(\alpha ,\beta) \in T^{U}_{K}(0,0)$. Then there exist sequences $\left\{\delta_{i}\right\}_{i=1}^{\infty}$ and $\left\{(p_{i},q_{i})\right\}_{i=1}^{\infty}$ such that $\delta_i \rightarrow 0^{+}$, $(p_{i},q_{i})\rightarrow (0,0)$ as $i\rightarrow +\infty$ and \begin{eqnarray}\label{eq14} (x_{i},y_{i})=(0,0)+\delta_{i}(\alpha,\beta)+\delta_{i}(p_{i},q_{i})\in K
\end{eqnarray} for every $i=1,2,\ldots$ According to (\ref{ka})  we have that $x_i=y_i$ for every $i=1,2,\ldots$. It follows from (\ref{eq14})
that
\begin{equation}\label{eq15}
    \alpha+p_{i}=\beta+q_{i}
 \end{equation}
for every $i=1,2,\ldots$ Since  $p_{i}\rightarrow 0,$  $q_{i}\rightarrow 0$ as $i\rightarrow +\infty$, then we obtain from (\ref{eq15}) that  $\alpha=\beta$, and hence again by virtue of (\ref{eq15}) $p_i=q_i$ for every $i=1,2,\ldots$. Concluding, we obtain from (\ref{eq14}) that
\begin{eqnarray}\label{eq16} (\alpha,\alpha)+(p_{i},p_{i})\in \frac{1}{\delta_i}K
\end{eqnarray} for every $i=1,2,\ldots$ Since $p_{i}\rightarrow 0$ as $i\rightarrow +\infty$, then  (\ref{ka}) and (\ref{eq16}) yield that $\alpha \geq 0.$ Thus, we have
\begin{eqnarray}\label{eq17}T^{U}_{K}(0,0) \subset \left\{(\alpha,\alpha)\in \mathbb{R}^{2}:\alpha \geq 0\right\}.
\end{eqnarray}

Since $T^{L}_{K}(0,0) \subset T^{U}_{K}(0,0)$, then (\ref{eq14*}) and (\ref{eq17}) implies the validity of equality (\ref{equ}).

\end{example}

We now give an  example which illustrates that lover and upper contingent cones not always coincide.
\begin{example} \label{ex2}
Let the set $\Omega \subset \mathbb{R}^2$ be defined as
\begin{eqnarray}\label{ome}
\Omega=\left\{ \left(\frac{1}{(2n)!},\frac{1}{(2n)!}\right):n\in \mathbb{N}\right\}\bigcup \left\{(0,0)\right\}.
\end{eqnarray}

Let us show that $T^{U}_{\Omega}(0,0) \not \subset T^{L}_{\Omega}(0,0)$.

At first, we will show that $(1,1)\notin T^{L}_{\Omega}(0,0)$. Let us choose a  sequence $\{\delta_{k}\}_{k=1}^{\infty}$, where  $\displaystyle \delta_{k}=\frac{1}{(2k+1)!}$. By virtue of (\ref{ome}) we have
\begin{eqnarray*}
 && \lim_{k \rightarrow \infty}\frac{1}{\delta_{k}}d \big((0,0)+\delta_{k}(1,1),\Omega \big)=\lim_{k \rightarrow \infty}(2k+1)! \cdot d\left( \left(\frac{1}{(2k+1)!} \ ,\frac{1}{(2k+1)!}\right),\Omega \right)\\
  &=&\lim_{k \rightarrow \infty}(2k+1)! \cdot \left\| \left(\frac{1}{(2k+1)!} \ ,\frac{1}{(2k+1)!}\right)-\left(\frac{1}{(2k+2)!} \ ,\frac{1}{(2k+2)!}\right)\right\|\\
  &=&\lim_{k \rightarrow \infty}(2k+1)! \cdot \sqrt{2}\left(\frac{1}{(2k+1)!}-\frac{1}{(2k+2)!}\right)\\
  &=&\sqrt{2} \lim_{k \rightarrow \infty}\left(1-\frac{1}{2k+2}\right)\\
  &=&\sqrt{2}> 0,
\end{eqnarray*}
and hence $(1,1) \notin T^{L}_{\Omega}(0,0)$.

Let  $\displaystyle \delta_k= \frac{1}{(2k)!},$ $k=1,2,\ldots$ Then
\begin{eqnarray*}\liminf_{\delta\rightarrow 0^+}\frac{1}{\delta}d \big((0,0)+\delta(1,1),\Omega \big) &\leq& \lim_{k \rightarrow \infty}\frac{1}{\delta_{k}}d \big( (\delta_{k},\delta_{k}),\Omega\big)\\ &=&\lim_{k \rightarrow \infty}(2k)! \cdot d\left( \left(\frac{1}{(2k)!},\frac{1}{(2k)!} \right),\Omega \right)=0,
\end{eqnarray*}
and hence  $(1,1) \in T^{U}_{\Omega}(0,0)$. Since $(1,1) \not \in T^{L}_{\Omega}(0,0)$, we have that  $T^{L}_{\Omega}(0,0) \not \subset T^{U}_{\Omega}(0,0).$

Note that similarly to the Example \ref{ex1} it is possible to show that

\begin{equation*}
T^{L}_{\Omega}(0,0)=\left\{(0,0)\right\}, \ \ \ T^{U}_{\Omega}(0,0)=\left\{(\alpha,\alpha)\in \mathbb{R}^{2}:\alpha \geq 0\right\}.
\end{equation*}
\end{example}

\setcounter{equation}{0}

\section{Upper and Lower Directional Derivative Sets}

In this section upper  and lower differentials and  directional derivative sets of the set valued maps are investigated. The graph of the set valued map $F(\cdot):X \rightsquigarrow Y$ is denoted by $gr  F(\cdot)$ and is defined as $gr  F(\cdot)=\left\{(x,y)\in X\times Y:y\in F(x)\right\},$ where $X$ and $Y$ are Banach spaces.

\begin{definition} \label{loclip}
  Let $F(\cdot):X \rightsquigarrow Y$ be a given set valued map. Assume  that for every $x \in X$ there exist $L_x \geq 0$ and $r_x>0$ such that for every $y \in B(x,r_x)$ and $z \in B(x,r_x)$ the inequality
  \begin{eqnarray*}
  h\left(F(y), F(z)\right) \leq L_x \cdot d(y,z)
  \end{eqnarray*}
  is satisfied. Then the set valued map $F(\cdot)$ is called locally Lipschitz continuous.

  Here $h\left(F(y), F(z)\right)$ denotes the Hausdorff distance between the sets  $F(y)$ and $F(z)$, $B(x,r_x) =\left\{y\in X: d(y,x)<r_x\right\}.$
\end{definition}

\begin{definition} \label{updif}
  Let $F(\cdot):X \rightsquigarrow Y$ be a set valued map, $(x,y)\in X\times Y$. The set valued map $D^{U}F(x,y)|(\cdot):X \rightsquigarrow Y$ satisfying the equality
  \begin{eqnarray*}
  gr D^{U}F(x,y)|(\cdot)=T^{U}_{gr  F(\cdot)}(x,y)
  \end{eqnarray*}
  is called the upper differential of the set valued map $F(\cdot)$ at the point $(x,y).$

  Here $T^{U}_{gr  F(\cdot)}(x,y)$ is upper contingent cone of the set $gr  F(\cdot)$ at the point $(x,y).$
\end{definition}

\begin{definition} \label{lodif}
  Let $F(\cdot):X \rightsquigarrow Y$ be a set valued map, $(x,y)\in X\times Y$. The set valued map $D^{L}F(x,y)|(\cdot):X \rightsquigarrow Y$ satisfying the equality
  \begin{eqnarray*}
  gr D^{L}F(x,y)|(\cdot)=T^{L}_{gr F(\cdot)}(x,y)
  \end{eqnarray*}
  is called the lower differential of the set valued map $F(\cdot)$ at the point $(x,y).$

   Here $T^{L}_{gr  F(\cdot)}(x,y)$ is lower contingent cone of the set $gr  F(\cdot)$ at the point $(x,y).$
\end{definition}

Now, let us formulate definitions of the upper and lower directional derivative sets of a given set valued map.

\begin{definition} \label{upder}
  Let $F(\cdot):X \rightsquigarrow Y$ be a set valued map, $(x,y)\in X\times Y$ and $p\in X$. The set $\displaystyle \frac{\partial^{U}F(x,y)}{\partial p}$ defined by
  \begin{eqnarray*}\frac{\partial^{U}F(x,y)}{\partial p}=\left\{u\in Y:\liminf_{\delta\rightarrow 0^{+}}\frac{1}{\delta}d \big(y+\delta u,F(x+\delta p )\big)=0\right\}
  \end{eqnarray*} is called
  upper derivative set of the set valued map $F(\cdot)$ at the point $(x,y)$ in the direction $p.$
  \end{definition}

\begin{definition} \label{loder}
  Let $F(\cdot):X \rightsquigarrow Y$ be a set valued map, $(x,y)\in X\times Y$ and $p\in X$. The set $\displaystyle \frac{\partial^{L}F(x,y)}{\partial p}$ defined by
  \begin{eqnarray*}\frac{\partial^{L}F(x,y)}{\partial p}=\left\{u\in Y:\lim_{\delta\rightarrow 0^{+}}\frac{1}{\delta}d\big(y+\delta u,F(x+\delta p )\big)=0\right\}
  \end{eqnarray*} is called
  lower derivative set of the set valued map $F(\cdot)$ at the point $(x,y)$ in the direction $p.$
  \end{definition}

It is obvious that for given set valued $F(\cdot):X\rightsquigarrow Y$ the inclusions
  \begin{eqnarray*}\frac{\partial^{L}F(x,y)}{\partial p}\subset\frac{\partial^{U}F(x,y)}{\partial p} ,
  \end{eqnarray*}
  \begin{eqnarray*}
  D^{L}F(x,y)|(p)\subset D^{U}F(x,y)|(p)
  \end{eqnarray*} are satisfied for every $(x,y)\in gr F(\cdot)$ and $p\in X.$

The following propositions characterize lower and upper derivative sets and differentials of the set valued maps.

\begin{proposition} \label{prop22}
Let $F(\cdot):X\rightsquigarrow Y$ be a set valued map, $gr F(\cdot)$ be a closed set, $(x,y)\in gr F(\cdot)$. Then for every $ p\in X$ the inclusions
  \begin{eqnarray*}\frac{\partial^{L}F(x,y)}{\partial p}\subset D^{L}F(x,y)|(p), \ \ \frac{\partial^{U}F(x,y)}{\partial p}\subset D^{U}F(x,y)|(p)
  \end{eqnarray*}
  are verified.

  If $F(\cdot):X\rightsquigarrow Y$ is a locally Lipschitz continuous set valued map, then for every $ p\in X$ the equalities
  \begin{eqnarray*}\frac{\partial^{L}F(x,y)}{\partial p}=D^{L}F(x,y)|(p) \ , \ \ \frac{\partial^{U}F(x,y)}{\partial p}=D^{U}F(x,y)|(p)
  \end{eqnarray*}
  hold.
\end{proposition}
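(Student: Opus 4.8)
The plan is to dispose of the two inclusions first and then upgrade them to equalities under the Lipschitz hypothesis; in every case the upper and lower versions run through the identical argument, the only difference being whether one works along a subsequence $\delta_i\to 0^{+}$ (because $\liminf=0$) or along all $\delta\to 0^{+}$ (because $\lim=0$).

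For the inclusions, the key is the elementary pointwise estimate
\begin{eqnarray*}
d\big((x,y)+\delta(p,u),gr F(\cdot)\big)\le d\big(y+\delta u,F(x+\delta p)\big),
\end{eqnarray*}
valid for every $\delta>0$: for each $z\in F(x+\delta p)$ the pair $(x+\delta p,z)$ lies in $gr F(\cdot)$, and the product norm on $X\times Y$ satisfies $\|(0,y+\delta u-z)\|=\|y+\delta u-z\|_{Y}$, so taking the infimum over $z\in F(x+\delta p)$ gives the inequality. Dividing by $\delta$ and passing to $\liminf_{\delta\to 0^{+}}$ (resp.\ $\lim_{\delta\to 0^{+}}$) shows at once that $u\in\frac{\partial^{U}F(x,y)}{\partial p}$ forces $(p,u)\in T^{U}_{gr F(\cdot)}(x,y)$, i.e.\ $u\in D^{U}F(x,y)|(p)$, and similarly for the lower objects. (Closedness of $gr F(\cdot)$ is not used here; it is recorded only so that the differentials of Definitions \ref{updif} and \ref{lodif} are well defined.)

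For the converse under local Lipschitz continuity, fix $p\in X$, let $L_x\ge 0$ and $r_x>0$ be as in Definition \ref{loclip}, and take $u\in D^{U}F(x,y)|(p)$, so $\liminf_{\delta\to 0^{+}}\frac1\delta d\big((x,y)+\delta(p,u),gr F(\cdot)\big)=0$. Along a sequence $\delta_i\to 0^{+}$ realising this $\liminf$, choose near--minimizers $(\xi_i,\eta_i)\in gr F(\cdot)$, i.e.\ $\eta_i\in F(\xi_i)$ with
\begin{eqnarray*}
\big\|(x+\delta_i p,y+\delta_i u)-(\xi_i,\eta_i)\big\|< d\big((x,y)+\delta_i(p,u),gr F(\cdot)\big)+\delta_i^{2};
\end{eqnarray*}
then $\frac1{\delta_i}\|x+\delta_i p-\xi_i\|\to 0$ and $\frac1{\delta_i}\|y+\delta_i u-\eta_i\|\to 0$, hence $\xi_i\to x$, so $\xi_i\in B(x,r_x)$ and $x+\delta_i p\in B(x,r_x)$ for all large $i$. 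Using that the one--sided deviation is dominated by the Hausdorff distance, $d\big(\eta_i,F(x+\delta_i p)\big)\le h\big(F(\xi_i),F(x+\delta_i p)\big)$, together with the Lipschitz bound $h\big(F(\xi_i),F(x+\delta_i p)\big)\le L_x\|\xi_i-(x+\delta_i p)\|$, the triangle inequality gives
\begin{eqnarray*}
\frac1{\delta_i}d\big(y+\delta_i u,F(x+\delta_i p)\big)\le \frac1{\delta_i}\|y+\delta_i u-\eta_i\|+\frac{L_x}{\delta_i}\|\xi_i-(x+\delta_i p)\|\longrightarrow 0,
\end{eqnarray*}
so $u\in\frac{\partial^{U}F(x,y)}{\partial p}$. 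Repeating the same computation with $\delta\to 0^{+}$ in place of the sequence $\{\delta_i\}_{i=1}^{\infty}$ (and with a near--minimizer $(\xi_\delta,\eta_\delta)$ for each small $\delta$) yields $D^{L}F(x,y)|(p)\subset\frac{\partial^{L}F(x,y)}{\partial p}$, and combining with the inclusions already established gives both equalities.

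The argument is mostly bookkeeping; the two points that need care are using near--minimizers rather than exact projections onto $gr F(\cdot)$ (the distance to a closed set in a general Banach space need not be attained), and verifying that the approximating first coordinates $\xi_i$ (resp.\ $\xi_\delta$) eventually enter the ball $B(x,r_x)$ where the Lipschitz inequality of Definition \ref{loclip} applies — this is exactly where the normalisation $\frac1{\delta}\|x+\delta p-\xi_\delta\|\to 0$ is invoked. No deeper obstacle is anticipated.
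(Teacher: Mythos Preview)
The paper states Proposition~\ref{prop22} without proof, so there is nothing to compare your argument against directly. Your proof is correct and is the natural one: the pointwise bound
\[
d\big((x,y)+\delta(p,u),\,gr F(\cdot)\big)\le d\big(y+\delta u,\,F(x+\delta p)\big)
\]
gives the inclusions immediately, and under the Lipschitz hypothesis the near--minimizer construction recovers the reverse inclusions via the chain
\[
d\big(y+\delta u,F(x+\delta p)\big)\le \|y+\delta u-\eta_\delta\|+L_x\|\xi_\delta-(x+\delta p)\|.
\]
Both the use of $\delta^{2}$--approximate projections (since the infimum defining $d(\cdot,grF)$ need not be attained in a general Banach space) and the check that $\xi_\delta,\,x+\delta p\in B(x,r_x)$ for small $\delta$ are handled cleanly.

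One implicit assumption worth making explicit: you use that the product norm on $X\times Y$ satisfies $\|(0,b)\|=\|b\|_Y$ and dominates each component norm. The paper never fixes a norm on $X\times Y$, but any of the standard choices (max, sum, or $\ell^2$ combination) has these properties, so this is harmless. It is also worth noting that the paper's proof of Theorem~\ref{teo23}, which treats the special case $X=\mathbb{R}$, $p\neq 0$ without the Lipschitz hypothesis, uses a different mechanism (a reparametrisation $\beta_k=\frac{p+s_k}{p}\delta_k$ that absorbs the $X$--direction error into the parameter); your Lipschitz argument is the correct substitute when $\dim X>1$ and no such one--dimensional rescaling is available.
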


Note that if $F(\cdot)$ is not a locally Lipschitz continuous set-valued map, then the equality $\displaystyle \frac{\partial^{L}F(x,y)}{\partial p}=D^{L}F(x,y)|(p)$ is not valid.

\begin{example} \label{ex3}
  Let $X=Y=\mathbb{R}$ and set-valued map $F(\cdot):\mathbb{R}\rightsquigarrow \mathbb{R}$ be defined as
  \begin{eqnarray}\label{eq21}
F(x)= \left\{
\begin{array}{lllll}
  \displaystyle x \cdot sin\frac{1}{x} & , &  x \in \mathbb{R} \setminus \left\{0\right\}\\
  \displaystyle 0 & , &  x= 0.
  \end{array}
  \right.
  \end{eqnarray}

The map $F(\cdot):\mathbb{R}\rightsquigarrow \mathbb{R}$ defined by (\ref{eq21}) is not locally Lipschitz continuous on $\mathbb{R}.$

  Since
\begin{eqnarray*}
   \frac{1}{\delta} d(0+\delta \cdot 0,F(0+\delta\cdot 1)=
   \frac{1}{\delta}d(0,F(\delta))
 =\frac{1}{\delta}\cdot \delta \left|sin\frac{1}{\delta}\right|
  =\left|sin\frac{1}{\delta}\right|
 \end{eqnarray*}
for every $\delta >0,$ then we conclude that $\displaystyle \lim_{\delta\rightarrow 0^{+}}\frac{1}{\delta}d(0+\delta \cdot 0,F(0+\delta\cdot 1)$ does not exist, and hence
  \begin{eqnarray}\label{eqad1}\displaystyle 0 \not \in \frac{\partial^{L}F(0,0)}{\partial 1}.
 \end{eqnarray}

  Now let us show that $0\in D^{L}F(0,0)|(1)$, which is equivalent to the inclusion $(1,0)\in T_{grF(\cdot)}^{L}(0,0)$.

  Choose an arbitrary sequence   $\left\{\delta_{k}\right\}_{k=1}^{\infty}$  such that  $\delta_k \rightarrow 0^{+}$ as $k\rightarrow +\infty$.
  Then for each $k$ there exists $i_k$ such that $\displaystyle \delta_{k}\in\left(\frac{1}{\pi (i_{k}+1)},\frac{1}{\pi i_{k}}\right].$ Since
  $\delta_k \rightarrow 0^{+}$ as $k\rightarrow +\infty$, then $i_k \rightarrow +\infty$ as $k\rightarrow +\infty.$ It is obvious that
 \begin{equation}\label{eq22}
   \frac{1}{\delta_{k}}<\pi (i_{k}+1)
 \end{equation}
 and
 \begin{eqnarray}\label{eq23}
   d\big( (\delta_{k},0),grF(\cdot) \big)  \leq \frac{1}{\pi i_{k}}-\frac{1}{\pi (i_{k}+1)}.
   \end{eqnarray}

   From (\ref{eq22}) and (\ref{eq23}) it follows
   \begin{eqnarray} \label{eq24} \lim_{k \rightarrow \infty}\frac{1}{\delta_{k}}d \big((0,0)+\delta_{k}(1,0),gr F(\cdot)\big) &=&
   \lim_{k \rightarrow \infty}\frac{1}{\delta_{k}}d((\delta_{k},0),gr F(\cdot)) \nonumber \\ & \leq & \lim_{k\rightarrow \infty}\pi (i_{k}+1)\left[\frac{1}{\pi i_{k}}-\frac{1}{\pi (i_{k}+1)}\right] \nonumber \\
   &=&\lim_{k\rightarrow \infty}\pi (i_{k}+1)\frac{\pi}{\pi i_{k}(i_{k}+1)} \nonumber \\
   &=&\lim_{k\rightarrow \infty}\frac{\pi}{i_{k}}  =0.
 \end{eqnarray}

 Since  $\left\{\delta_{k}\right\}_{k=1}^{\infty}$ is  arbitrarily chosen, then (\ref{eq24}) implies that
\begin{eqnarray*}
\lim_{\delta \rightarrow 0^+}\frac{1}{\delta}d \big((0,0)+\delta(1,0),gr F(\cdot)\big) =0,
\end{eqnarray*} and hence $(1,0) \in T_{grF(\cdot)}^{L}(0,0)$. Thus,

\begin{eqnarray}\label{eqad2}0 \in D^{L}F(0,0)|(1).
\end{eqnarray}

(\ref{eqad1}) and (\ref{eqad2}) yield that
\begin{eqnarray*} D^{L}F(0,0)|(1)\neq \frac{\partial^{L}F(0,0)}{\partial 1} .
\end{eqnarray*}
\end{example}

\begin{theorem} \label{teo23}
  Let $F:\mathbb{R}\rightsquigarrow Y$ be a set valued map,  $gr F(\cdot)$ be a closed set, $(x,y)\in grF(\cdot)$. Then for each
$ p \in \mathbb{R}\setminus \left\{0\right\}$ the equality
\begin{eqnarray*}
\frac{\partial^{U}F(x,y)}{\partial p}=D^{U}F(x,y)|(p)
\end{eqnarray*}
holds.
\end{theorem}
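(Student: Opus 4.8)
The plan is to start from Proposition \ref{prop22}, which already gives the inclusion $\frac{\partial^{U}F(x,y)}{\partial p}\subset D^{U}F(x,y)|(p)$ for \emph{every} $p\in X$ and every set valued map with closed graph, with no continuity assumption. So the only thing left to prove is the opposite inclusion $D^{U}F(x,y)|(p)\subset \frac{\partial^{U}F(x,y)}{\partial p}$, and this is exactly where the scalar nature of $X=\mathbb{R}$ and the hypothesis $p\neq 0$ are used.

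First I would take an arbitrary $u\in D^{U}F(x,y)|(p)$ and unwind the definitions: this means $(p,u)\in T^{U}_{gr F(\cdot)}(x,y)$, so by the sequential characterization of the upper contingent cone recalled in Section~2 there are sequences $\delta_{i}\to 0^{+}$ in $\mathbb{R}$ and $(p_{i},u_{i})\to (0,0)$ in $\mathbb{R}\times Y$ with
\[
y+\delta_{i}u+\delta_{i}u_{i}\in F\big(x+\delta_{i}p+\delta_{i}p_{i}\big),\qquad i=1,2,\ldots .
\]
The key step is a reparametrization of the scaling parameter that absorbs the perturbation of the argument of $F$. Since $p\neq 0$, I set $\tau_{i}:=\delta_{i}\big(1+\frac{p_{i}}{p}\big)$, chosen so that $x+\tau_{i}p=x+\delta_{i}p+\delta_{i}p_{i}$ exactly; because $p_{i}\to 0$ one has $\tau_{i}>0$ for all large $i$, $\tau_{i}\to 0^{+}$, and $\delta_{i}/\tau_{i}\to 1$. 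Then $y+\delta_{i}u+\delta_{i}u_{i}\in F(x+\tau_{i}p)$, and using $\tau_{i}-\delta_{i}=\delta_{i}p_{i}/p$,
\[
\frac{1}{\tau_{i}}\,d\big(y+\tau_{i}u,\,F(x+\tau_{i}p)\big)\le \frac{1}{\tau_{i}}\big\|\tau_{i}u-\delta_{i}u-\delta_{i}u_{i}\big\| =\frac{\delta_{i}}{\tau_{i}}\Big\|\frac{p_{i}}{p}\,u-u_{i}\Big\|\longrightarrow 0 .
\]
Since $d(\cdot,\cdot)\ge 0$, a liminf over $\delta\to 0^{+}$ is bounded above by the limit along the sequence $\{\tau_{i}\}$, hence $\liminf_{\delta\to 0^{+}}\frac{1}{\delta}d\big(y+\delta u,F(x+\delta p)\big)=0$, i.e.\ $u\in \frac{\partial^{U}F(x,y)}{\partial p}$, which closes the argument.

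I do not anticipate a genuinely hard step; the point that requires the most care is the legitimacy of the reparametrization — verifying that $\tau_{i}>0$ for $i$ large and that a liminf over $\delta\to 0^{+}$ is indeed controlled by the limit along a single sequence $\tau_{i}\to 0^{+}$. It is also worth flagging explicitly why $X=\mathbb{R}$ is indispensable here: for a general Banach space $X$ the perturbation $\delta_{i}p_{i}$ need not be a scalar multiple of $p$, so there is in general no $\tau_{i}$ with $x+\tau_{i}p=x+\delta_{i}p+\delta_{i}p_{i}$, and the reparametrization trick is unavailable. The same remark explains why this argument does not carry over to the lower derivative sets (the reparametrized parameter $\tau(\delta)=\delta(1+p(\delta)/p)$ need not cover a full one-sided neighbourhood of $0$), which is consistent with Example~\ref{ex3}.
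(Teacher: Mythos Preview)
Your proof is correct and follows essentially the same route as the paper: both use Proposition~\ref{prop22} for the forward inclusion and, for the reverse, reparametrize via $\tau_i=\delta_i(p+p_i)/p$ (the paper's $\beta_k=\frac{p+s_k}{p}\delta_k$) so that the argument of $F$ becomes exactly $x+\tau_i p$, then estimate $\frac{1}{\tau_i}d(y+\tau_i u,F(x+\tau_i p))$ by the triangle inequality. Your additional remarks on why $X=\mathbb{R}$ is essential and why the trick fails for the lower derivative are accurate and consistent with Example~\ref{ex3}.
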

\begin{proof}
  By virtue of Proposition \ref{prop22} we have
  \begin{eqnarray}\label{eq216}
  \frac{\partial^{U}F(x,y)}{\partial p}\subset D^{U}F(x,y)|(p).
  \end{eqnarray}
  Let us prove that
  \begin{eqnarray}\label{eq217}
  D^{U}F(x,y)|(p)\subset\frac{\partial^{U}F(x,y)}{\partial p}.
  \end{eqnarray}

   Choose an arbitrary $v\in D^{U}F(x,y)|(p)$. Then  $(p,v) \in T_{grF(\cdot)}^{U}(x,y)$. According to the definition of $T_{grF(\cdot)}^{U}(x,y)$, there exist sequences $\left\{\delta_{k}\right\}_{k=1}^{\infty}$ and  $\left\{(s_{k},q_{k})\right\}_{k=1}^{\infty}$ such that $\delta_k \rightarrow 0^+,$ $(s_k,q_k)\rightarrow (0,0)$ as  $k\rightarrow +\infty$ and
    \begin{eqnarray}\label{eq218}
    (x,y)+\delta_{k}(p,v)+\delta_{k}(s_{k},q_{k}) \in gr F(\cdot)
     \end{eqnarray} for every $k=1,2,\ldots$.  Let
  \begin{eqnarray}\label{eq220}\beta_{k}=\frac{p+s_{k}}{p}\cdot \delta_{k}
   \end{eqnarray}
   Since $p\neq 0$ and $s_k \rightarrow 0$ as $k\rightarrow +\infty$, then without loss of generality it is possible to assume that $\beta_k >0$ for every $k=1,2,\ldots$ It follows from (\ref{eq220}) that $\beta_k \rightarrow 0^+$ as $k\rightarrow +\infty.$

  From (\ref{eq218}) and (\ref{eq220}) we obtain that
 \begin{eqnarray}\label{eq221}
 \left(x+\beta_{k}p,y+\frac{p}{p+s_{k}}\beta_{k}v+\frac{p }{p+s_{k}}\beta_{k} q_{k}\right)\in gr F(\cdot)
 \end{eqnarray}
 for every $k=1,2,\ldots$.

 Denote
\begin{eqnarray}\label{eq222}b_{k}=\frac{p}{p+s_{k}}v-v+\frac{p}{p+s_{k}}q_{k}.
\end{eqnarray}
 It is obvious that $b_{k} \rightarrow 0$ as $k\rightarrow +\infty$.   (\ref{eq221}) and  (\ref{eq222}) imply that \begin{eqnarray*}(x+\beta_{k}p,y+\beta_{k}v+\beta_{k}b_{k})\in grF(\cdot)
 \end{eqnarray*} and hence
 \begin{eqnarray}\label{eq223} y+\beta_{k}v+\beta_{k}b_{k}\in F(x+\beta_{k}p)
\end{eqnarray}
 for every $k=1,2,\ldots$ (\ref{eq223}) yields that
\begin{eqnarray*}
  \lim_{k\rightarrow \infty}\frac{1}{\beta_{k}}d(y+\beta_{k}v,F(x+\beta_{k}p)) &\leq & \lim_{k\rightarrow \infty}\frac{1}{\beta_{k}}[d(y+\beta_{k}v,y+\beta_{k}v+\beta_{k}b_{k})\\
  & + & d(y+\beta_{k}v+\beta_{k}b_{k},F(x+\beta_{k}p)]\\
  &=& \lim_{k\rightarrow \infty}\frac{1}{\beta_{k}}\beta_{_{k}}\|b_{k}\|=0,
\end{eqnarray*} and consequently
\begin{eqnarray}\label{eq224}\liminf_{\beta\rightarrow 0^+}\frac{1}{\beta}d(y+\beta v,F(x+\beta p))=0.
\end{eqnarray}\

We have from (\ref{eq224})
\begin{eqnarray*}v\in \frac{\partial^{U}F(x,y)}{\partial p}.
\end{eqnarray*}
Since  $v\in D^{U}F(x,y)|(p)$ is arbitrarily chosen, we obtain validity of the inclusion (\ref{eq217}). (\ref{eq216}) and (\ref{eq217}) complete the proof.
\end{proof}

\setcounter{equation}{0}

\section{Properties of the Compact Subsets of the Directional Derivative Sets}

In this section, the relations between the compact subsets of the directional derivative sets and the given set valued map are studied.

The Hausdorff deviation of the set  $E$ from the set $D$ is denoted by $h^*(E,D)$ and defined as
\begin{eqnarray*}
\displaystyle h^*(E,D)=\sup_{x\in E}\, d\left(x, D\right),
\end{eqnarray*}
where $d\left(x, D\right)$ is the distance from the point $x$ to the set $D$. If
$h^*(E,D)< r$, then  the inclusion  $E\subset D + rB$ is verified, where $B=\left\{x\in X: \left\|x\right\| \leq 1\right\}.$

\begin{theorem}\label{teo31}
 Let $F:X\rightsquigarrow Y$ be a set valued map,   $grF(\cdot)$ be a closed set, $p \in X$,  $G\subset Y$ be a compact set and  $(x,y)\in grF(\cdot) .$ Assume that the inclusion
 \begin{equation}\label{kosul}
  G\subset \frac{\partial^L F(x,y)}{\partial p}
 \end{equation}
  is satisfied. Then the equality
 \begin{equation}\label{sonuc}
 \lim_{\delta\rightarrow 0}\frac{1}{\delta}h^*\left( y+\delta G, F(x+\delta p)\right)=0
 \end{equation}
 is valid.
 \end{theorem}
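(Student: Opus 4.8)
The plan is to read (\ref{sonuc}) as a uniform-convergence (Dini-type) statement on the compact set $G$ and to obtain it from the pointwise convergence supplied by (\ref{kosul}). First I would unwind the left-hand side. By the definition of the Hausdorff deviation,
\[
\frac{1}{\delta}h^{*}\bigl(y+\delta G,F(x+\delta p)\bigr)=\sup_{u\in G}\frac{1}{\delta}d\bigl(y+\delta u,F(x+\delta p)\bigr)=:\sup_{u\in G}\varphi_{\delta}(u),
\]
where $\varphi_{\delta}(u)=\tfrac{1}{\delta}d\bigl(y+\delta u,F(x+\delta p)\bigr)$ for $\delta>0$. Inclusion (\ref{kosul}) says precisely that $\varphi_{\delta}(u)\to 0$ as $\delta\to 0^{+}$ for each fixed $u\in G$ (and, the limit being finite, that $F(x+\delta p)\neq\emptyset$ for all sufficiently small $\delta$). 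The whole task is to upgrade this to $\sup_{u\in G}\varphi_{\delta}(u)\to 0$.

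The key device is an equi-Lipschitz estimate in the variable $u$ that does not depend on $\delta$. For $u,u'\in Y$ and $\delta>0$, the triangle inequality for the distance function $d(\cdot,F(x+\delta p))$ gives $d(y+\delta u,F(x+\delta p))\le d(y+\delta u',F(x+\delta p))+\|\delta u-\delta u'\|$, hence
\[
\bigl|\varphi_{\delta}(u)-\varphi_{\delta}(u')\bigr|\le\|u-u'\|\qquad\text{for every }\delta>0 .
\]
Thus every member of the family $\{\varphi_{\delta}\}_{\delta>0}$ is $1$-Lipschitz in $u$, with constant independent of $\delta$.

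Next I would fix $\varepsilon>0$. By compactness of $G$, choose a finite set $u_{1},\dots,u_{N}\in G$ with $G\subset\bigcup_{i=1}^{N}B(u_{i},\varepsilon/2)$. For each $i$, the inclusion $u_{i}\in\frac{\partial^{L}F(x,y)}{\partial p}$ (see Definition \ref{loder}) yields $\delta_{i}>0$ with $\varphi_{\delta}(u_{i})<\varepsilon/2$ for all $\delta\in(0,\delta_{i}]$. Put $\delta_{0}=\min\{\delta_{1},\dots,\delta_{N}\}>0$. Given $\delta\in(0,\delta_{0}]$ and an arbitrary $u\in G$, pick $u_{i}$ with $\|u-u_{i}\|<\varepsilon/2$; the equi-Lipschitz bound then gives $\varphi_{\delta}(u)\le\varphi_{\delta}(u_{i})+\|u-u_{i}\|<\varepsilon$. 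Taking the supremum over $u\in G$ gives $\frac{1}{\delta}h^{*}\bigl(y+\delta G,F(x+\delta p)\bigr)\le\varepsilon$ for every $\delta\in(0,\delta_{0}]$; since $\varepsilon>0$ was arbitrary, this is exactly (\ref{sonuc}).

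The argument is short, and the only genuine point — what I expect to be the main obstacle — is the passage from pointwise to uniform convergence; it is resolved by the $\delta$-independent $1$-Lipschitz continuity of $\varphi_{\delta}(\cdot)$, which lets a single finite $\varepsilon/2$-net control the entire family at once. (Here ``$\delta\to 0$'' is to be read as ``$\delta\to 0^{+}$'', consistently with Definition \ref{loder}; the one-sidedness plays no role beyond making $1/\delta$ and $F(x+\delta p)$ meaningful.)
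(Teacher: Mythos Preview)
Your proof is correct. Both your argument and the paper's ultimately rest on the same elementary fact---the $1$-Lipschitz dependence of $\tfrac{1}{\delta}d(y+\delta u,F(x+\delta p))$ on $u$, uniformly in $\delta$---but you deploy it differently. The paper argues by contradiction and sequential compactness: it assumes a bad sequence $\delta_i\to 0^{+}$, extracts witnesses $g_i\in G$, passes to a convergent subsequence $g_i\to g_*\in G$, and then uses the triangle inequality (your Lipschitz bound in disguise) together with $g_*\in\frac{\partial^L F(x,y)}{\partial p}$ to reach a contradiction. You instead work directly, invoking total boundedness of $G$ to produce a finite $\varepsilon/2$-net and using the equi-Lipschitz estimate to control the whole family uniformly. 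Your route is slightly more constructive---it yields an explicit $\delta_0(\varepsilon)$ and makes transparent that the passage from pointwise to uniform convergence is a Dini-type phenomenon---while the paper's sequential argument is the more traditional packaging and avoids naming the Lipschitz constant explicitly. Neither approach needs anything beyond compactness and the triangle inequality; they are two equivalent faces of the same compactness argument.
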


\begin{proof}
 Let us assume the contrary, i.e., let the equality  $(\ref{sonuc})$ do not be satisfied. Then  there exist a sequence $\left\{\delta_i\right\}_{i=1}^{\infty}$ and $\alpha_* >0$ such that $\delta_i \rightarrow 0^+$ as $i\rightarrow +\infty$ and
 \begin{eqnarray} \label{eq313}
  \lim_{i\rightarrow \infty}\frac{1}{\delta_i}h^*\left(y+\delta_i G, F(x+\delta_i p)\right)=\alpha_* .
 \end{eqnarray}

   Let $\mu_* <\alpha_*$ be an arbitrary number. It follows from  $(\ref{eq313})$ that
  there exists  $N_1>0$ such that
  \begin{eqnarray*} h^*\left(y+\delta_i G, F(x+\delta_i p)\right)> \frac{\mu_*}{2}\delta_i
   \end{eqnarray*}
 for every  $i>N_1$.  Then for each $i>N_1$ there exists a $g_i\in G$ such that
  \begin{eqnarray*}
  y+\delta_i g_i \notin F(x+\delta_i p) + \frac{\mu_*}{2}\delta_i B,
   \end{eqnarray*}
 and hence
 \begin{eqnarray}\label{eq314}
   d\left( y+\delta_i g_i, F(x+\delta_i p)\right)>\dfrac{\mu_*}{4}\delta_i .
 \end{eqnarray}

Since $G\subset Y$ is a compact set, $g_i\in G$ for every $i=1,2,\ldots,$ then without loss of generality we may assume that $g_i\rightarrow g_*$ as $i\rightarrow \infty$ and $g_*\in G$.

According to (\ref{kosul}) we have  $\displaystyle g_* \in \frac{\partial^L F(x,y)}{\partial p},$ and therefore
\begin{eqnarray}\label{eq315}\lim_{\delta\rightarrow 0}\dfrac{1}{\delta}d\left(y+\delta g_*, F(x+\delta p)\right)=0.
\end{eqnarray}

Since $g_i\rightarrow g_*$ as $i\rightarrow \infty$, then it follows from (\ref{eq315}) that
\begin{eqnarray}\label{eq316}
 \displaystyle \lim_{i\rightarrow \infty}\frac{1}{\delta_i}d\left( y+\delta_i g_i, F(x+\delta_i p) \right)
  &\leq& \displaystyle \lim_{i\rightarrow \infty}\frac{1}{\delta_i}\big[ d\left(y +\delta_i g_i, y+\delta_i g_* \right) \nonumber \\ \qquad &+&
        d\left( y+\delta_i g_*, F(x+\delta_i p) \right)\big] \nonumber \\
  &=&  \lim_{i\rightarrow \infty}\left\|g_i-g_*\right\|= 0.
\end{eqnarray}
  $(\ref{eq314})$ and $(\ref{eq316})$ contradict. Proof is completed.
\end{proof}

\begin{corollary}\label{cor1}
  Suppose that the conditions of Theorem \ref{teo31} are satisfied. Then there exist $\delta_* >0 $  and a function $r(\cdot):[0,\delta_*]\rightarrow [0,\infty)$ such that  $r(\delta )\rightarrow 0^+ $ as $\delta \rightarrow 0^+$ and
\begin{eqnarray*} y+\delta G\subset F(x+\delta p) + \delta r(\delta)B
\end{eqnarray*}
for every $\delta \in [0,\delta_*].$
\end{corollary}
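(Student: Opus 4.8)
The plan is to read Corollary \ref{cor1} as essentially a restatement of the conclusion $(\ref{sonuc})$ of Theorem \ref{teo31}, with the limit repackaged as an explicit ``modulus'' function $r(\cdot)$. First I would invoke Theorem \ref{teo31}: the hypotheses of the corollary are exactly the hypotheses of that theorem, so $(\ref{kosul})$ holds and hence $\lim_{\delta\rightarrow 0}\frac{1}{\delta}h^*\left(y+\delta G, F(x+\delta p)\right)=0$. In particular there is a $\delta_*>0$ (one may take $\delta_*\leq 1$) such that $h^*\left(y+\delta G, F(x+\delta p)\right)$ is finite for every $\delta\in(0,\delta_*]$.

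Next I would define the candidate modulus by $r(\delta)=\frac{1}{\delta}h^*\left(y+\delta G, F(x+\delta p)\right)+\delta$ for $\delta\in(0,\delta_*]$ and $r(0)=0$. By the displayed limit, $r(\delta)\rightarrow 0^+$ as $\delta\rightarrow 0^+$, and $r$ takes values in $[0,\infty)$. For a fixed $\delta\in(0,\delta_*]$ one then has $h^*\left(y+\delta G, F(x+\delta p)\right)=\delta r(\delta)-\delta^2<\delta r(\delta)$; applying the elementary fact recorded just before Theorem \ref{teo31} — that $h^*(E,D)<\rho$ implies $E\subset D+\rho B$ — with $E=y+\delta G$, $D=F(x+\delta p)$ and $\rho=\delta r(\delta)$ gives $y+\delta G\subset F(x+\delta p)+\delta r(\delta)B$. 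For $\delta=0$ the inclusion reduces to $\{y\}\subset F(x)$ (since $0\cdot G=\{0\}$ when $G\neq\emptyset$, and $0\cdot r(0)B=\{0\}$; the case $G=\emptyset$ is trivial), which holds because $(x,y)\in grF(\cdot)$ means $y\in F(x)$.

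The only point that requires a little care — and hence the ``main obstacle'', such as it is — is the passage from the Hausdorff deviation estimate to an honest set inclusion with the closed ball $B$. The bare equality $h^*\left(y+\delta G, F(x+\delta p)\right)=\delta\cdot\bigl(\tfrac{1}{\delta}h^*(\cdots)\bigr)$ only yields $d\left(y+\delta g, F(x+\delta p)\right)\leq\tfrac{1}{\delta}h^*(\cdots)\cdot\delta$ for each $g\in G$, and since $F(x+\delta p)$ — though closed, being the fibre over $x+\delta p$ of the closed set $grF(\cdot)$ — need not attain this distance in an infinite-dimensional $Y$, an inclusion into $F(x+\delta p)+\rho B$ is immediate only when the inequality is \emph{strict}. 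Adding the harmless term $+\delta$ to $r$ converts ``$\leq$'' into ``$<$'' while preserving $r(\delta)\rightarrow 0^+$, so no genuine difficulty remains; everything else is bookkeeping. (Any strictly positive $\sigma(\delta)$ with $\sigma(\delta)\rightarrow 0^+$ would serve equally well in place of $\delta$.)
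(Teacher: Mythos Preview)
Your proposal is correct. The paper states Corollary \ref{cor1} without proof, treating it as an immediate consequence of Theorem \ref{teo31}; your argument simply makes explicit how the limit $(\ref{sonuc})$ is repackaged as a modulus $r(\cdot)$ and then converted to the set inclusion via the fact $h^*(E,D)<\rho\Rightarrow E\subset D+\rho B$ recalled just before the theorem, so it is exactly the intended route (with the ``$+\delta$'' slack being a careful way to secure the strict inequality that fact requires).
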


Theorem  \ref{teo31} is not true if in (\ref{kosul}) the lower directional derivative set will be replaced by the upper directional derivative set.
\begin{example} \label{ex4}
 Let the set-valued map $F(\cdot):\mathbb{R}\rightsquigarrow \mathbb{R}$ be defined as in Example \ref{ex3} by (\ref{eq21}), $p=1,$ $(x,y)=(0,0) \in gr F(\cdot)$.
   One can show that $\displaystyle \frac{\partial^U F(0,0)}{\partial 1}=[-1,1].$  Let $G=\left[-1,1\right].$ Then  $\displaystyle G\subset \frac{\partial^U F(0,0)}{\partial 1}$, but
 \begin{eqnarray*}
  \displaystyle
  \liminf_{\delta\rightarrow 0^+}\frac{1}{\delta}h^*\left( 0+\delta G, F(0+\delta \cdot 1)\right)&=&
  \liminf_{\delta\rightarrow 0^+}\frac{1}{\delta}h^*\left( 0+\delta [-1,1] , \delta\sin \frac{1}{\delta}\right)\nonumber \\
  &=&\displaystyle  \liminf_{\delta\rightarrow 0^+}\frac{1}{\delta}h^*\left( \left[-\delta,\delta\right] , \delta\sin \frac{1}{\delta}\right)\nonumber \\
  &=& \displaystyle\liminf_{\delta\rightarrow 0^+}\frac{1}{\delta} \sup_{t\in \left[-\delta,\delta \right]}\, \left|t-\sin \frac{1}{\delta}\right| \nonumber \\
  &\geq& \frac{1}{2} \ ,
\end{eqnarray*}
and  equality (\ref{sonuc}) is not satisfied.
\end{example}

Note, that in Theorem \ref{teo31} the lower directional derivative set $\displaystyle \frac{\partial^L F(x,y)}{\partial p}$ cannot be replaced by the set $\displaystyle D^L F(x,y)|(p)$.

\begin{example} \label{ex5}
 Let the set-valued map $F(\cdot):\mathbb{R}\rightsquigarrow \mathbb{R}$ be defined as in Example \ref{ex3} by (\ref{eq21}), $p=1,$ $(x,y)=(0,0) \in gr F(\cdot)$.
  According to  Example \ref{ex3}, $\displaystyle 0 \in D^L F(0,0)|(1).$  Let $G=\left\{0\right\}.$ Then  $\displaystyle G\subset D^U F(0,0)|(1)$.

Since
\begin{eqnarray*}
  \displaystyle \frac{1}{\delta} h^*\left( 0+\delta G, F(0+\delta \cdot 1)\right)
  =\displaystyle \frac{1}{\delta} h^*\left(0 , \delta\sin \frac{1}{\delta}\right)
  = \displaystyle \frac{1}{\delta} \left|\delta\sin \frac{1}{\delta} \right| = \left|\sin \frac{1}{\delta} \right|
\end{eqnarray*} for every $\delta >0,$  the limit  $\displaystyle\lim_{\delta\rightarrow 0+}\frac{1}{\delta}h^*\left( 0+\delta G, F(0+\delta \cdot 1)\right)$ does not exist which verifies that equality (\ref{sonuc}) is  not satisfied.
\end{example}
\newpage
\begin{flushright}
Anadolu University, Mathematics Department, \\26470 Eskisehir, Turkey.\\
sizgi@anadolu.edu.tr\\
nsahin@anadolu.edu.tr\\
ahuseyin@anadolu.edu.tr\\
nhuseyin@anadolu.edu.tr
\end{flushright}

\end{document}